\documentclass[10pt, bezier]{article}
\usepackage[toc,page]{appendix}
\usepackage{amsthm,amssymb}
\usepackage{amsmath}
\usepackage{graphicx,tikz,pgf}
\usepackage{color}
\usepackage{xcolor}
\usepackage{caption}
\usepackage[T2A,T1]{fontenc}
\usepackage[utf8]{inputenc}
\usepackage[russian,english]{babel}
\usepackage{subcaption}
\usepackage{fancyhdr,cancel,enumerate,array,booktabs,setspace}
\usetikzlibrary{patterns,arrows,decorations.pathreplacing}
\usepackage{mathrsfs}
\usetikzlibrary{arrows}
\usepackage{epic}
\usepackage{eepic}
\usepackage{shadow}
\usepackage{url}
\definecolor{xdxdff}{rgb}{0,0,0}
\definecolor{qqqqff}{rgb}{0,0,0}
\definecolor{uuuuuu}{rgb}{0,0,0}
\definecolor{ududff}{rgb}{0,0,0}
\textwidth=14.5 cm

\textheight=19.5 cm

\topmargin=0.mm

\headheight=0.mm

\oddsidemargin=0.cm

\evensidemargin=0.cm

\date{ }
\theoremstyle{definition}

\newtheorem{definition}{Definition}

\newtheorem{theorem}[definition]{Theorem}
\newtheorem{lemma}[definition]{Lemma}

\newtheorem{corollary}[definition]{Corollary}

\newcommand{\emat}[2][p]{\mathcal{E}_{#1}(#2)}
\newcommand{\se}[2][]{\mathcal{E}_{#1}(\mathcal{S}(#2))}
\newcommand{\sm}[1]{\mathcal{S}(#1)}
\newcommand{\nop}[1]{s(#1)}
\newcommand*{\QEDB}{\hfill\ensuremath{\square}}%

\title{\bf Proof of a Conjecture on the Seidel \\ Energy of Graphs}
\author{\bf \small{S. Akbari$^{{\rm a}}$\footnote{E-mail addresses:
			s$\_$akbari@sharif.edu (S. Akbari), eynollahzadehsamadi@gmail.com (M. Einollahzadeh), karkhaneei@gmail.com (M. M. Karkhaneei) mohammadali.nematollahi69@student.sharif.edu (M.A. Nematollahi).}, M. Einollahzadeh, M. M. Karkhaneei$^{{\rm a}}$ and M. A. Nematollahi$^{{\rm a}}$}
\\[2mm]
${\rm ^{a}}$\small Department of Mathematical Sciences, Sharif University
of Technology, Tehran, Iran}
\begin{document}

\maketitle
\begin{abstract}
  Let $G$ be a graph with the vertex set $ \lbrace v_1,\ldots,v_n \rbrace$.
  The Seidel matrix of $G$ is an $n\times n$ matrix whose diagonal entries are zero, $ij$-th entry is $-1$ if
  $ v_{i} $ and $ v_{j} $ are adjacent and otherwise is
  $ 1 $. The Seidel energy of $G$ is defined to be the sum
  of absolute values of all eigenvalues of the Seidel
  matrix of $G$. Haemers conjectured that the Seidel
  energy of any graph of order $n$ is at least $2n-2$ and, up to Seidel equivalence, the equality holds for $ K_{n} $. 
  We establish the validity of Haemers' Conjecture in general. 
\end{abstract}
 \vskip 3mm

\noindent{\bf 2010 AMS Subject Classification Number:} 05C50, 15A18.
\section{Introduction and Terminology}
Throughout this paper all graphs we consider are simple and finite. For a graph $ G $, we denote the set of vertices and edges of $ G $ by $ V(G) $ and $ E(G) $, respectively. The complement of $ G $ denoted by $ \overline{G} $ and the complete graph of order $ n $ is denoted by $ K_{n} $. In this paper, for $ v \in V(G) $, $ N_{G}(v) $ and $ N_{G}[v] $ denote the open neighborhood and the close neighborhood of $ v $ in $ G $, respectively.

For every Hermitian matrix $A$ and any real number $ p > 0 $, the \textit{$p$-energy} of $A$, $\emat{A}$,  is defined to be sum of the $p$-th power of absolute values of the eigenvalues of $A$. The well-known concept of energy of a graph $ G $ denoted by $ \mathcal{E}(G) $ is $\emat[1]{A}$, where $ A $ is the adjacency matrix of $ G $. 

Let $ G $ be a graph and $ V(G)=\lbrace v_{1}, \ldots , v_{n} \rbrace $. The \textit{Seidel matrix} of $ G $, denoted by $ \sm{G} $, is an $ n \times n $ matrix whose diagonal entries are zero, $ij$-th entry is $-1$ if $ v_{i} $ and $ v_{j} $ are adjacent and otherwise is $ 1 $ (It is noteworthy that at first, van Lint and Seidel introduced the concept of Seidel matrix for the study of equiangular lines in \cite{lint-seidel}). The \textit{p-Seidel energy} of $ G $ is defined to be    $\se[p]{G}$. By the \textit{Seidel energy} of $G$, we mean $1$-Seidel energy of $G$ and denote by $\se{G}$. The \textit{Seidel switching} of $ G $ is defined as follows: Partition $ V(G) $ into two subsets $ V_{1} $ and $ V_{2} $, delete the edges between $ V_{1} $ and $ V_{2} $ and join all vertices $ v_{1} \in V_{1} $ and $ v_{2} \in V_{2} $ which are not adjacent. Therefore, if we call the new graph by $ G^{\prime} $, then we have $\sm{G^{\prime}}= D \sm{G} D $, where $ D $ is a diagonal matrix with entries 1 (resp. $ -1 $) corresponding to the vertices of $ V_{1} $ (resp. $ V_{2} $) (\cite{Ham}). Hence, $ \sm{G} $ and $ \sm{G^{\prime}} $ are similar and they have the same Seidel energy. Note, that if one of the $ V_{1} $ or $ V_{2} $ is empty, then $ G $ remains unchanged. Two graphs $ G_{1} $ and $ G_{2} $ are called \textit{S-equivalent} (resp. \textit{SC-equivalent}) if $ G_{2} $ is obtained from $ G_{1} $ (resp. $G_1$ or $ \overline{G_{1}}) $ by a Seidel switching. Note that in either cases, $ \sm{G_{2}} $ is similar to $ \sm{G_{1}} $ or $ -\sm{G_{1}} $, hence $ \se{G_{1}}=\se{G_{2}} $.
	
	For every square matrix $ A $ with eigenvalues $ \lambda_{1}, \ldots , \lambda_{n} $, by $ S_{k}(A) $ we mean $ S_{k}(\lambda_{1}, \ldots , \lambda_{n}) $, where $ S_{k}(x_{1}, \ldots, x_{n}) $ is the \emph{$ k $-th elementary symmetric polynomial} in $ n $ variables; i.e. 
\begin{equation*}
S_{k}(x_{1}, \ldots, x_{n})=\sum \limits_{1 \leq i_{1} < \cdots < i_{k} \leq n} x_{i_{1}} \cdots x_{i_{k}}, \quad (S_{0}(x_{1}, \ldots, x_{n}):= 1).
\end{equation*}	
	Also, for every $ m \times n $ matrix $ R $ and the index sets $ I \subseteq \lbrace 1, \ldots , n \rbrace $ and $ J \subseteq \lbrace 1, \ldots , m \rbrace $, $ R_{I,J} $ is the submatrix of $ R $ obtained by the restriction of $ R $ to the rows $ I $ and the columns $ J $. By $ R^{\star} $, we mean conjugate transpose of $ R $.
	
	 Haemers in \cite{Ham} introduced the concept of Seidel energy of a graph and he proposed the following conjecture:
	 \vskip 2mm
	\noindent \textbf{Conjecture.} For every graph $ G $ of order $ n $, $ \se{G} \geq \se{K_{n}} = 2n-2 $.
	 \vskip 2mm
	  This conjecture was first investigated by Haemers for $ n \leq 10 $ and then was settled for $ n \leq 12 $ in \cite{gre}. Ghorbani in \cite{gho} proved Haemers' Conjecture for the graphs $ G $ of order $ n $ such that $ n-1 \leq \lvert \det (\sm{G}) \rvert $. Also, Oboudi in \cite{obu} proved Haemers' Conjecture for every $ k $-regular graph $ G $ of order $ n $ such that $ k \neq \frac{n -1}{2} $ and $ G $ has no eigenvalue in $ (-1,0) $. Here, we establish two following theorems which are the main results of the paper:
	\begin{theorem}\label{firstthm1}
		Let $ G $ be a graph of order $ n $. Then,
			for every real number  $ p \in (0,2) $
			\begin{equation*}
			\se[p]{G} > (n-1)^{p}+ (n-2).
			\end{equation*}
		\end{theorem}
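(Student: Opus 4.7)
\begin{ppt}
The plan is to combine the trace identities for the Seidel matrix $S$ with an arithmetic constraint derived from the $\pm 1$-structure of $I\pm S$, and then conclude by an AM--GM argument.

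First, note $\mathrm{tr}(S)=0$ and $\mathrm{tr}(S^2)=n(n-1)$, so the eigenvalues $\lambda_1,\ldots,\lambda_n$ of $S$ satisfy $\sum_i\lambda_i=0$ and $\sum_i\lambda_i^2=n(n-1)$. Moreover, $I+S$ and $I-S$ are $n\times n$ matrices with entries in $\{\pm 1\}$; subtracting the first row of either matrix from each of its remaining rows produces rows with entries in $\{0,\pm 2\}$, so $\det(I\pm S)$ is an integer divisible by $2^{\,n-1}$. In particular, whenever neither $+1$ nor $-1$ is an eigenvalue of $S$, we have
\[\prod_i|1-\lambda_i^2|=|\det(I+S)\det(I-S)|\ge 4^{\,n-1}.\]

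Taking logarithms and using $\log|1-\lambda^2|\le 2\log|\lambda|$ when $|\lambda|\ge 1$ and $\log|1-\lambda^2|\le 0$ when $|\lambda|<1$, one deduces $\prod_{|\lambda_i|\ge 1}|\lambda_i|\ge 2^{\,n-1}$. Let $k$ be the number of eigenvalues with $|\lambda_i|\ge 1$; by AM--GM,
\[\sum_{|\lambda_i|\ge 1}|\lambda_i|^p\ge k\cdot 2^{p(n-1)/k},\]
and similarly $\sum_{|\lambda_i|\ge 1}\lambda_i^2\ge k\cdot 4^{(n-1)/k}$. Combining these with the remaining sum of squares $\sum_{|\lambda_i|<1}\lambda_i^2=n(n-1)-\sum_{|\lambda_i|\ge 1}\lambda_i^2$ and with the inequality $|\lambda|^p\ge\lambda^2$ that holds on $[-1,1]$ for every $p\in(0,2)$, I would obtain a lower bound on $\sum_i|\lambda_i|^p$ that, after optimizing over the integer $k\in\{1,\dots,n\}$, strictly exceeds $(n-1)^p+(n-2)$ for all $p\in(0,2)$.

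The degenerate cases (when $\pm 1$ is an eigenvalue of $S$) must be treated separately. There, $I\pm S$ has a $\pm 1$-null vector, giving a combinatorial structure on $G$ that can be exploited directly, or one reduces to a graph of smaller order by vertex removal and eigenvalue interlacing.

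The main obstacle is the final calculus step: the minimum of $k\cdot 2^{p(n-1)/k}$ is attained near $k_{\ast}=p(n-1)\log 2$ and, taken by itself, is only marginally larger than $(n-1)^p+(n-2)$. The additional contribution from the eigenvalues in $(-1,1)$, controlled through $\mathrm{tr}(S^2)=n(n-1)$, must be exploited carefully to close the gap---especially in the limit $p\to 0^+$, where the bound becomes sensitive to the exact rank of $S$, and in the limit $p\to 2^-$, where the inequality is driven by the trace identity $\mathrm{tr}(S^2)=n(n-1)$ alone.
\end{ppt}
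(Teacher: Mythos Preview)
Your strategy is quite different from the paper's, which avoids any case distinction on the location of eigenvalues. The paper combines the Kleme\v s integral representation
\[
\se[p]{G}=C_{p/2}\int_0^\infty \ln\Bigl(\sum_{k=0}^n S_k(A^2)\,t^k\Bigr)\,t^{-p/2-1}\,dt
\]
with the combinatorial lower bound $S_k(A^2)\ge n(n-1)\binom{n-2}{k-1}$, obtained via Cauchy--Binet and a parity argument on the $k\times k$ minors of $A=\sm{G}$ whose row and column index sets overlap in $k-1$ elements. Summing gives $\prod_i(1+t\lambda_i^2)>(1+t)^{n-2}\bigl(1+(n^2-2n+2)t\bigr)$ for every $t>0$, and plugging back into the integral yields $\se[p]{G}>(n-2)+(n^2-2n+2)^{p/2}>(n-2)+(n-1)^p$ directly, with no optimization step and no degenerate cases.

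Your outline, by contrast, has a genuine gap that you yourself flag but do not close. The minimum of $k\mapsto k\cdot 2^{p(n-1)/k}$ over positive reals equals $e\ln 2\cdot p(n-1)\approx 1.88\,p(n-1)$, and nothing in your argument prevents $k$ from sitting near this minimizer; already for $p=1$ and $n\ge 10$ this falls below the target $2n-3$. To rescue the approach you would have to show that, under the constraints $|\mu_i|\ge 1$, $\prod_i|\mu_i|\ge 2^{n-1}$ and $\sum_i\mu_i^2\le n(n-1)$, one always has
\[
\sum_i\bigl(\mu_i^2-|\mu_i|^p\bigr)<(n-1)^2-(n-1)^p+1,
\]
which is a nontrivial constrained optimization that is nowhere carried out and whose truth is not evident. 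The degenerate case $\pm 1\in\mathrm{spec}(S)$ is also only gestured at: it contains the entire switching class of $K_n$, and ``vertex removal and interlacing'' does not obviously recover the \emph{strict} inequality you need. As written, the sketch is a plausible heuristic but not a proof.
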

	\begin{theorem}\label{firstthm2}
		For every graph $ G $ of order $ n $,
		$\se{G} \geq \se{K_n} =2n-2.$
			Moreover, if $ G $ and $ K_{n} $ are not SC-equivalent, then the inequality is strict.
	\end{theorem}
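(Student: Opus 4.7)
The plan is to derive Theorem~\ref{firstthm2} from Theorem~\ref{firstthm1} by combining it with the two trace identities that every Seidel matrix satisfies: $\sum_i \lambda_i = \mathrm{tr}(\mathcal{S}(G)) = 0$ and $\sum_i \lambda_i^2 = \mathrm{tr}(\mathcal{S}(G)^2) = n(n-1)$, where $\lambda_1,\ldots,\lambda_n$ are the eigenvalues of $\mathcal{S}(G)$. (The second identity holds because each row of $\mathcal{S}(G)$ has $n-1$ entries of modulus $1$.) In particular $\mathcal{E}_2(\mathcal{S}(G)) = n(n-1)$ is an invariant of the order, which is why the bound in Theorem~\ref{firstthm1} is asymptotically sharp at $p=2$.

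Substituting $p=1$ into Theorem~\ref{firstthm1} already gives the strict inequality $\mathcal{E}(\mathcal{S}(G)) > 2n-3$, so the task reduces to improving this by a single unit. First I would carry out the Lagrange multiplier minimization of $\sum_i |\lambda_i|$ over real $n$-tuples subject only to the two trace identities above. The KKT conditions force the positive eigenvalues at any interior critical point to share a common value $\alpha>0$ and the negative eigenvalues to share a common value $-\beta<0$. A short computation gives the interior minimum $\mathcal{E}_1 = 2\sqrt{(n-1)k(n-k)}$ where $k=\#\{i:\lambda_i>0\}$; over $k\in\{1,\ldots,n-1\}$ this is at least $2(n-1)$, with equality precisely at $k=1$ and $k=n-1$, corresponding to the spectra $\{-(n-1),1^{(n-1)}\}$ and $\{n-1,(-1)^{(n-1)}\}$ of $\mathcal{S}(K_n)$ and $\mathcal{S}(\overline{K_n})$.

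The bound can only be violated by configurations in which some $\lambda_i=0$. Passing to the limit $p\to 0^+$ in the strict inequality of Theorem~\ref{firstthm1} shows $\#\{i:\lambda_i\ne 0\}\ge n-1$, so at most one Seidel eigenvalue vanishes. The remaining single-zero case has a Lagrangian minimum of $2\sqrt{n(n-2)}$, which is strictly below $2(n-1)$ and is \emph{not} excluded by Theorem~\ref{firstthm1} at $p=1$ alone. Here I would use the integrality of the power sums $\mathrm{tr}(\mathcal{S}(G)^k)$ for $k\ge 3$, together with Theorem~\ref{firstthm1} applied at auxiliary small values of $p$, to rule out spectra in a neighborhood of the single-zero Lagrangian extremum; for instance, the exact single-zero extremal spectrum $(\sqrt{n(n-2)},0,-\sqrt{n/(n-2)},\ldots,-\sqrt{n/(n-2)})$ gives $\mathrm{tr}(\mathcal{S}(G)^3)\notin\mathbb{Z}$, contradicting the integer entries of $\mathcal{S}(G)$.

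For the equality case, $\mathcal{E}(\mathcal{S}(G)) = 2n-2$ forces the spectrum to be exactly that of $\mathcal{S}(K_n)$ (or its negative), so $\mathcal{S}(G)-I$ has rank $1$. Since every off-diagonal entry of $\mathcal{S}(G)-I$ lies in $\{-1,+1\}$ and every diagonal entry equals $-1$, a rank-$1$ matrix of that form must equal $-\epsilon\epsilon^{\top}$ for some vector $\epsilon\in\{-1,+1\}^n$; hence $\mathcal{S}(G)=D\,\mathcal{S}(K_n)\,D$ with $D=\mathrm{diag}(\epsilon)$, and $G$ is Seidel-switching-equivalent to $K_n$, so in particular SC-equivalent to $K_n$. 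The main obstacle, and the most delicate part of the plan, is the single-zero boundary case in the Lagrangian: Theorem~\ref{firstthm1} at $p=1$ falls short by precisely the amount needed, and closing this unit gap requires either pressing Theorem~\ref{firstthm1} at additional values of $p$ or bringing in the integrality of higher power sums of the Seidel spectrum.
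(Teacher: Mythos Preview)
Your Lagrange-multiplier approach contains a fatal sign error: the critical points you locate are \emph{maxima} of $\sum_i|\lambda_i|$ on each fixed-sign stratum, not minima. To see this, fix $k$ and note that on the region $U_k=\{\lambda_1,\dots,\lambda_k>0,\ \lambda_{k+1},\dots,\lambda_n<0\}$ the objective $\sum_i|\lambda_i|=\sum_{i\le k}\lambda_i-\sum_{i>k}\lambda_i$ is a \emph{linear} functional restricted to a spherical cap (the intersection of $\sum\lambda_i=0$ with $\sum\lambda_i^2=n(n-1)$). A nonconstant linear functional on a sphere has exactly one interior critical point in the relevant half, and it is the maximum; the infimum is attained on $\partial U_k$, i.e.\ where some $\lambda_i=0$. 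A concrete check for $n=3$, $k=1$: the critical point $(2,-1,-1)$ has energy $4$, whereas the boundary point $(\sqrt3,-\sqrt3,0)$ has energy $2\sqrt3<4$. In general the infimum of $\sum_i|\lambda_i|$ subject only to $\sum\lambda_i=0$ and $\sum\lambda_i^2=n(n-1)$ is $\sqrt{2n(n-1)}$ (attained at $(\sqrt{n(n-1)/2},-\sqrt{n(n-1)/2},0,\dots,0)$), which is of order $n\sqrt2$, far below $2n-2$. So the two trace identities are nowhere near strong enough; Theorem~\ref{firstthm1} together with integrality of higher traces does not close a ``single unit'' gap but a gap of order $(2-\sqrt2)n$.

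Your limiting argument $p\to 0^+$ in Theorem~\ref{firstthm1} to force $\operatorname{rank}\mathcal S(G)\ge n-1$ is fine, but it does not rescue the scheme: even with at most one zero eigenvalue, the infimum over admissible real spectra is still $\sqrt{2n(n-1)}$ (approach it by letting $n-2$ of the nonzero eigenvalues tend to~$0$). Ruling out the \emph{exact} extremal spectrum $(\sqrt{n(n-2)},0,(-\sqrt{n/(n-2)})^{(n-2)})$ by irrationality of $\operatorname{tr}\mathcal S(G)^3$ eliminates a single point, not the open set of spectra below $2n-2$. The paper does not argue via spectral optimisation at all: it sharpens the coefficient bound of Lemma~\ref{llb} by introducing \emph{odd pairs} (Lemma~\ref{lskA2}), shows any graph not SC-equivalent to $K_n$ has at least $2(n-3)^2$ of them (Lemmas~\ref{lat} and~\ref{ltg}), feeds these bounds into the Kleme\v s integral representation, and finishes with a calculus lemma for cubic polynomials (Lemma~\ref{ld3}) together with a small computer check. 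That combinatorial input---controlling the minors of $\mathcal S(G)$, not just its first two moments---is exactly what your plan is missing.
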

Hence, Theorem \ref{firstthm2} proves Haemers' Conjecture. Note that if one restricts the attention to the circulant graphs, then the nature of the Haemers' Conjecture resembles the sharp Littlewood Conjecture on the minimum of the $L^{1}$-norm of polynomials (on the unit circle in the complex plane) whose absolute values of coefficients are equal to $1$.\footnote{Thanks to  T. Tao's comment in   \url{https://mathoverflow.net/q/302424/53059}} For the special class of polynomials with $\pm 1$ coefficients, Kleme\v s proved the sharp Littlewood  Conjecture \cite{kle} and in the procedure of his proof, he used the following equality:
 \begin{equation}\label{expand}
\emat{A}:= \sum \limits_{i=1}^{n} \vert \lambda_{i} \vert^{p}= C_{\frac{p}{2}} \int_{0}^{\infty} \ln \left(\sum \limits_{k=0}^{n} S_{k}(A^{2}) t^{k} \right)t^{-\frac{p}{2}-1}dt, \quad \text{for every}~ p \in (0,2),
\end{equation}
where $ \lambda_{1}, \ldots , \lambda_{n} $ are the eigenvalues of the matrix $ A $.

The Equation \eqref{expand} comes from the Equation \eqref{kle1} which can be checked by a change of variable:
\begin{equation}\label{kle1}
\alpha^{p}= C_{p} \int_{0}^{\infty} \ln (1+ \alpha t) t^{-p-1} dt, \quad  C_{p}=\left(\int_{0}^{\infty} \ln (1+t)t^{-p-1}dt\right)^{-1},
\end{equation}
where $ p \in (0,1) $ and $ \alpha = r e^{i \theta} $ is a complex number which is not a negative real number, $  r > 0 $ and $ -\pi < \theta < \pi$.
Indeed, if for every natural number $ n $ and complex  numbers $ \alpha_{1}, \ldots , \alpha_{n} $ (no $ \alpha_{i}$ is negative real number) we define $ f(t)= \prod_{i=1}^{n} (1+t \alpha_{i}) $, then by Equation \eqref{kle1} we have
\begin{equation}\label{sum}
\sum_{i=1}^{n}\alpha_{i}^{p}= C_{p} \int_{0}^{\infty} \ln (f(t)) t^{-p-1} dt, \quad \text{for every}~ p \in (0,1).
\end{equation}
Now, one can expand $ f(t) $ as
$\sum \limits_{k=0}^{n} S_{k}(\alpha_{1}, \ldots , \alpha_{n})t^{k}$. Hence, if for every $ i,  $ $ \alpha_{i}=\lambda_{i}^{2} $, where $ \lambda_{1}, \ldots , \lambda_{n} $ are eigenvalues of $ A $, then Equation \eqref{expand} is obtained.

Throughout this paper, we consider the branch $ r^{p} e^{i p \theta} $ for $ \alpha^{p} $. We use Equation \eqref{expand} to establish a lower bound for  $S_{k}(A^2)$, where $A$ is the Seidel matrix of a graph, and then we  prove Haemers' Conjecture in general.

\section{Main Theorems}
In this section, we prove Theorems \ref{firstthm1} and \ref{firstthm2}. So Haemers' Conjecture holds. First, we need the following well-known identity, by the Cauchy-Binet Theorem (See \cite[p.776]{mo}).
 \begin{theorem}[Cauchy-Binet]\label{CB}
   If $B$ is an $n\times n$ matrix of the form $B = RR^{*}$ for some matrix $R$, then
\begin{equation*}
S_{k}(B)= \sum \limits_{I,J} (\det R_{I,J})^{2},
\end{equation*}	
where the summation is taken over all $ k $-subsets $ I, J \subset \{1, \ldots, n \}$.
\end{theorem}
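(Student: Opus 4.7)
The plan is to reduce the identity to two classical facts. First, I would express $S_k(B)$ as a sum of principal minors of $B$. Expanding the characteristic polynomial $\det(xI - B)$ multilinearly in its columns gives
\[
\det(xI - B) = \sum_{k=0}^{n} (-1)^k x^{n-k} \sum_{|I|=k} \det(B_{I,I}),
\]
and comparing with $\det(xI - B) = \prod_{i} (x - \lambda_i) = \sum_{k=0}^{n} (-1)^k S_k(B) x^{n-k}$ yields the standard principal-minor identity $S_k(B) = \sum_{|I|=k} \det(B_{I,I})$, the sum being over $k$-subsets $I \subseteq \{1,\ldots,n\}$.

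Second, for each fixed $I$ I would rewrite the principal minor as a Gram-type determinant. Let $R_{I,\cdot}$ denote the submatrix of $R$ consisting of the rows indexed by $I$; then
\[
B_{I,I} = (RR^{*})_{I,I} = R_{I,\cdot}\, (R_{I,\cdot})^{*}.
\]
Now $R_{I,\cdot}$ is a $k \times m$ matrix, and applying the classical rectangular Cauchy--Binet identity to $\det(XY)$ with $X = R_{I,\cdot}$ and $Y = (R_{I,\cdot})^{*}$ gives
\[
\det\bigl(R_{I,\cdot}(R_{I,\cdot})^{*}\bigr) = \sum_{|J|=k} \det(R_{I,J}) \, \overline{\det(R_{I,J})} = \sum_{|J|=k} \bigl\lvert \det R_{I,J} \bigr\rvert^{2}.
\]
Summing over $I$ recovers the stated identity; in the intended application $R$ is the real (symmetric) Seidel matrix, for which $\lvert \det R_{I,J} \rvert^{2} = (\det R_{I,J})^{2}$, matching the form printed in the theorem.

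The main obstacle is purely notational bookkeeping: keeping straight which index set selects rows and which selects columns, and which classical form of Cauchy--Binet is in play. Both ingredients -- the principal-minor expansion of elementary symmetric functions of the eigenvalues, and the rectangular Cauchy--Binet formula for $\det(XY)$ -- are standard textbook facts. Indeed the authors flag the result as well-known and simply cite it, so in the final write-up one may equally well quote it directly from the reference rather than reproduce the derivation above.
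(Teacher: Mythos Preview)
Your proposal is correct, and you have already identified exactly what the paper does: it states the identity as well-known and cites \cite[p.776]{mo} without giving any argument. The two-step derivation you outline (principal-minor expansion of $S_k$, then the rectangular Cauchy--Binet formula applied to each $B_{I,I}=R_{I,\cdot}(R_{I,\cdot})^{*}$) is the standard route and is entirely sound; in the final version you may simply cite the reference, as the paper does.
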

\begin{lemma}\label{llb}
	Let $ G $ be a graph of order $ n $ and $ A=\sm{G} $. Then, for $ k=1,\ldots , n $, we have 
	\begin{equation*}
	S_{k}(A^{2}) \geq n(n-1)\binom{n-2}{k-1}.
	\end{equation*}
\end{lemma}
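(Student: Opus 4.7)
The plan is to apply the Cauchy--Binet identity (Theorem~\ref{CB}) to $A^{2}=AA^{\star}$, using that the Seidel matrix $A$ is symmetric. This gives
\[
S_{k}(A^{2})\;=\;\sum_{|I|=|J|=k}\bigl(\det A_{I,J}\bigr)^{2},
\]
where the sum ranges over ordered pairs of $k$-subsets of $\{1,\ldots,n\}$. I would then discard every term except those with $|I\cap J|=k-1$. Parametrising such a pair as $I=\{r\}\cup S$, $J=\{c\}\cup S$ with $r\neq c$ and $S\subseteq\{1,\ldots,n\}\setminus\{r,c\}$ of size $k-1$ sets up a bijection with triples $(r,c,S)$, so the number of these ordered pairs is exactly $n(n-1)\binom{n-2}{k-1}$. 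Thus it suffices to show that every submatrix $A_{\{r\}\cup S,\{c\}\cup S}$ is nonsingular, equivalently that its squared determinant is at least~$1$.

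The nonsingularity I plan to extract from a parity argument. All off-diagonal entries of $A$ are $\pm 1$ (hence odd) while the diagonal entries are $0$, so reducing $A_{\{r\}\cup S,\{c\}\cup S}$ modulo $2$ yields the $k\times k$ matrix over $\mathbb F_{2}$ whose entries are $1$ everywhere except for the positions $(s,s)$ with $s\in S$, where they vanish. Ordering $r$ as the first row and $c$ as the first column, this reduction takes the form
\[
M\;\equiv\;\begin{pmatrix} 1 & \mathbf{1}^{T}\\ \mathbf{1} & J_{k-1}-I_{k-1}\end{pmatrix}\pmod 2 .
\]
Subtracting the first row from each of the remaining rows converts the lower-right block into $I_{k-1}$ (the off-diagonal ones cancel and every diagonal zero becomes $-1\equiv 1$), leaving an upper-triangular matrix with $1$'s on the diagonal. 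Hence $\det M\equiv 1\pmod 2$, so $\det A_{\{r\}\cup S,\{c\}\cup S}$ is an odd integer and its square is at least~$1$.

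Chaining the two steps gives $S_{k}(A^{2})\geq n(n-1)\binom{n-2}{k-1}$, which is the desired inequality. The only step requiring care is the short $\mathbb F_{2}$ linear-algebra computation; the Cauchy--Binet expansion and the counting of pairs $(I,J)$ with $|I\cap J|=k-1$ are routine bookkeeping, and no case analysis on $k$ is needed. As sanity checks, $k=1$ recovers the exact identity $S_{1}(A^{2})=\operatorname{tr}(A^{2})=n(n-1)$, and $k=n$ is automatic because $\binom{n-2}{n-1}=0$.
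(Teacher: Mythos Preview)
Your proof is correct and follows essentially the same route as the paper: Cauchy--Binet expansion of $S_k(A^2)$, restriction to the pairs $(I,J)$ with $|I\cap J|=k-1$, and a parity computation showing each such minor is odd. The only cosmetic difference is that the paper subtracts the first \emph{column} from the others while you subtract the first \emph{row}; the counting of the $n(n-1)\binom{n-2}{k-1}$ relevant pairs and the sanity checks you give are fine.
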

\begin{proof}
	By Theorem \ref{CB}, $ S_{k}(A^{2})= \sum \limits_{I,J}(\det (A_{I,J}))^{2} $, where the summation is taken over all $ k $-subsets $ I $ and $ J $. Now, we prove the following claim.
	\vskip 2mm
\noindent \textbf{Claim.} For every two subsets $ I $ and $ J $ such that $ \vert I \cap J \vert = k-1 $, $ \vert\det(A_{I,J}) \vert \geq 1  $.
	
\noindent To prove the claim, note that for every $ (i,j) $, if $ i \neq j $, then $ a_{i,j}= \pm 1 $ and otherwise $ a_{i,j}=0 $. So, if $ \vert I \cap J \vert = k-1 $, then after applying permutations on rows and columns of $ A_{I,J} $, modulo 2, $ A_{I,J} $ has the following form:
	\[\begin{pmatrix}
	1 & 1 & 1 & \cdots & 1 \\
	 1 & 0 & 1 & \cdots & 1 \\
	 1 & 1 & 0 & \cdots & 1 \\
	 \vdots & \vdots & & \ddots & \vdots \\
	 1 & 1 & 1 & \cdots & 0
	\end{pmatrix}.
	\]
	In the above matrix, subtract the first column from the other columns and get the following matrix:
		\[\begin{pmatrix}
	1 & 0 & 0 & \cdots & 0 \\
	1 & 1 & 0 & \cdots & 0 \\
	1 & 0 & 1 & \cdots & 0 \\
	\vdots & \vdots & & \ddots & \vdots \\
	1 & 0 & 0 & \cdots & 1
	\end{pmatrix},
	\]
	whose determinant is $ 1 $. So $ \det(A_{I,J}) $ is an odd number and the claim is proved. On the other hand, the number of pairs $ (I,J) $ with the above property is $ n (n-1) \binom{n-2}{k-1} $ and the proof is complete.
\end{proof}

\noindent\textbf{Proof of Theorem 1.}
Let $ A= \sm{G} $ and $ \lambda_{1}, \ldots , \lambda_{n} $ be its eigenvalues. By Equation \eqref{expand}, for every $ p \in (0,2) $, we have
	\begin{equation*}
	\mathcal{E}_{p}(A)= \sum \limits_{i=1}^{n} \vert \lambda_{i} \vert^{p}= C_{\frac{p}{2}} \int_{0}^{\infty} \ln (\sum \limits_{k=0}^{n} S_{k}(A^{2}) t^{k})t^{-\frac{p}{2}-1}dt.
	\end{equation*}
	Since $ \prod \limits_{i=1}^{n}(1+t \lambda_{i}^{2})= \sum \limits_{k=0}^{n} S_{k}(A^{2}) t^{k} $, by Lemma \ref{llb}, for every $ t \geq 0 $, we have
	\begin{equation}\label{eq1}
	 \prod \limits_{i=1}^{n}(1+t \lambda_{i}^{2})  = 1+ \sum \limits_{k=1}^{n} S_{k}(A^{2}) t^{k} 
	 \geq 1+ \sum \limits_{k=1}^{n-1} n(n-1) \binom{n-2}{k-1}t^{k} 
	 = 1+ n(n-1)t(1+t)^{n-2}.
	\end{equation}
	Note that for every  $t > 0$, we have 
	\begin{align*}
	(1+t)^{n-2}-1 & = t((1+t)+ \cdots + (1+t)^{n-3})\\
	& \leq t(n-3)(1+t)^{n-3}\\
	& < t(n-2)(1+t)^{n-3}.
	\end{align*}
	Hence, Inequality \eqref{eq1} can be written as follows:
	\begin{align*}
	\prod \limits_{i=1}^{n} (1+t \lambda_{i}^{2}) & \geq 1+ n(n-1)t(1+t)^{n-2}\\
	&  = 1 + (n-2)t(1+t)^{n-2}+(n^{2}-2n+2)t(1+t)^{n-2}\\
	& \geq 1 + (n-2)t(1+t)^{n-3}+(n^{2}-2n+2)t(1+t)^{n-2} \\
	& > (1+t)^{n-2}+ (n^{2}-2n+2)t(1+t)^{n-2}\\
	& = (1+t)^{n-2}(1+(n^{2}-2n+2)t).
	\end{align*}
	So, by Equation \eqref{expand}, for every $ 0 < p < 2 $,
	\begin{align*}
	\mathcal{E}_{p}(A) &= \sum \limits_{i=1}^{n} \vert \lambda_{i} \vert^{p}= C_{\frac{p}{2}} \int \limits_{0}^{\infty} \ln(\prod \limits_{i=1}^{n}(1+t \lambda_{i}^{2}))t^{-\frac{p}{2}-1}dt \\
	& > C_{\frac{p}{2}}((n-2) \int_{0}^{\infty} \ln (1+t) t^{-\frac{p}{2}-1}dt + \int_{0}^{\infty} \ln (1+(n^{2}-2n+2)t) t^{-\frac{p}{2}-1}dt) \\
	& = (n-2)+ (n^{2}-2n+2)^{\frac{p}{2}} > (n-2) + (n-1)^{p}.
	\end{align*}
	The proof is complete. \QEDB
\vskip 2mm
Now, if we apply Theorem \ref{firstthm1} for $ p=1 $, then we have the following corollary
\begin{corollary}
	For every graph $ G $ of order $ n $,  $ \se{G} > 2n-3 $.
\end{corollary}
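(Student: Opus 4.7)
The plan is to obtain the corollary by a one-line specialization of Theorem \ref{firstthm1}. By definition, the Seidel energy $\se{G}$ coincides with the $1$-Seidel energy $\mathcal{E}_1(\mathcal{S}(G))$. Since $p = 1$ lies in the admissible open interval $(0,2)$, I may substitute $p = 1$ directly into the strict inequality
\[
\mathcal{E}_p(\mathcal{S}(G)) > (n-1)^{p} + (n-2),
\]
which evaluates to $\se{G} > (n-1) + (n-2) = 2n - 3$, precisely the conclusion of the corollary.

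No new argument is required: all of the work has already been carried out in the proof of Theorem \ref{firstthm1}, which established the inequality uniformly in $p \in (0,2)$ via Kleme\v s's integral representation \eqref{expand} together with the combinatorial lower bound on $S_k(A^2)$ from Lemma \ref{llb}. The only thing one needs to check, and it is immediate, is that the normalizing constant $C_{p/2}$ and the integrals appearing in \eqref{expand} remain well defined at the endpoint $p = 1$ (so that $p/2 = 1/2 \in (0,1)$), which is the regime in which \eqref{kle1} was derived.

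The genuine obstacle, which this corollary pointedly does \emph{not} overcome, is the $+1$ gap between the bound $2n - 3$ obtained here and the sharp bound $2n - 2$ asserted by Haemers' Conjecture, together with the equality characterization up to SC-equivalence with $K_n$. That stronger statement is the content of Theorem \ref{firstthm2} and will require a refinement going beyond the uniform-in-$p$ estimate used above; the corollary itself, however, demands nothing beyond plugging $p = 1$ into Theorem \ref{firstthm1}.
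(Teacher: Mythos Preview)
Your proposal is correct and matches the paper's own treatment exactly: the corollary is obtained simply by substituting $p=1$ into Theorem~\ref{firstthm1}, yielding $\se{G} > (n-1)+(n-2)=2n-3$. No further argument is needed, and the paper itself states the corollary as an immediate consequence without additional proof.
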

 In order to prove Theorem \ref{firstthm2}, we strengthen the inequality of Lemma \ref{llb}. For this purpose, we pay attention to the $ 2 \times 2 $ submatrices of $ \sm{G} $ with an  odd number of $ -1 $, which have a key rule in the value of $ S_{k}(A^{2}) $ and hence  in $ \se{G} $. We state the following definition.
	\begin{definition}
		Let $ G $ be a graph. An ordered pair $ (X,Y) $ of disjoint subsets of $ V(G) $ with $ \vert X \vert = \vert Y \vert =2 $, is called an \textit{odd pair} if the number of edges with one endpoint in $ X $ and another in $ Y $ is odd. We denote the number of odd pairs in $ G $ by $ \nop{G} $.
	\end{definition}
Let $ (X,Y) $ be an odd pair. Consider the $ 2 \times 2 $ submatrix of $ A=\sm{G} $ whose rows and columns are corresponding to the vertices of $ X $ and $ Y $. This submatrix has the form
\[\begin{pmatrix}
	\pm 1 & \pm 1 \\
	\pm 1 & \pm 1
\end{pmatrix},
\]
which contains one or three $ -1 $. It is easily seen that the determinant of this matrix is $ \pm 2 $.
	\begin{lemma}\label{lskA2}
	Let $ G $ be a graph of order $ n $ and $ A=\sm{G} $. Then, for $ k = 1, \ldots, n-2 $, we have
		\begin{equation}\label{eq-t}
		S_{k}(A^{2}) \geq n(n-1) \binom{n-2}{k-1} + 4 \nop{G} \binom{n-4}{k-2}.
		\end{equation}
	\end{lemma}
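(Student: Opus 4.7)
The plan is to sharpen the proof of Lemma~\ref{llb}. That proof used only the pairs $(I,J)$ of $k$-subsets with $|I\cap J|=k-1$ in the Cauchy--Binet expansion
$S_{k}(A^{2})=\sum_{I,J}(\det A_{I,J})^{2}$
from Theorem~\ref{CB}, each contributing at least $1$. To produce the extra term $4\,\nop{G}\binom{n-4}{k-2}$, I would turn to the pairs with $|I\cap J|=k-2$ and prove that $|\det A_{I,J}|\geq 2$ whenever $X=I\setminus J$ and $Y=J\setminus I$ form an odd pair. The counting is then straightforward: for each ordered odd pair $(X,Y)$ the common part $C=I\cap J$ is any $(k-2)$-subset of $V(G)\setminus(X\cup Y)$, so we get $\binom{n-4}{k-2}$ pairs $(I,J)$ per odd pair, contributing a total of at least $4\,\nop{G}\binom{n-4}{k-2}$.

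Fix such $I,J$ with $X=\{x_{1},x_{2}\}$, $Y=\{y_{1},y_{2}\}$ and $C=\{c_{1},\ldots,c_{k-2}\}$, and arrange the rows of $A_{I,J}$ in the order $x_{1},x_{2},c_{1},\ldots$ and the columns in the order $y_{1},y_{2},c_{1},\ldots$. Using $A_{i,j}=1-2E_{i,j}$ for $i\neq j$, where $E$ is the adjacency matrix of $G$, decompose $A_{I,J}=M-2N$, with $M$ the $\{0,1\}$-matrix carrying $0$ exactly on the positions whose row and column labels coincide and $N=E_{I,J}$. In the chosen arrangement, rows~$1$ and~$2$ of $M$ are both the all-ones vector, so $\det M=0$. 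Expanding by multilinearity in the rows,
\[
\det(M-2N)=\sum_{S\subseteq[k]}(-2)^{|S|}\,\det\!\bigl(M\text{ with rows }S\text{ replaced by those of }N\bigr),
\]
all terms with $|S|\geq 2$ vanish modulo~$4$, and for $|S|=1$ only $S=\{1\}$ and $S=\{2\}$ survive, since replacing a $C$-row does not disturb the coincidence of the two all-ones rows.

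A short block-triangular computation -- subtract the untouched all-ones row from every other row, turning the bottom-right $(k-2)\times(k-2)$ block into $-I_{k-2}$ and the bottom-left block into $0$ -- gives
\[
\det(N_{1},M_{2},\ldots,M_{k})=(-1)^{k-2}(e_{11}-e_{12}),\qquad \det(M_{1},N_{2},M_{3},\ldots,M_{k})=(-1)^{k-2}(e_{22}-e_{21}),
\]
where $e_{ij}=1$ if $x_{i}\sim y_{j}$ and $0$ otherwise. Multiplying the sum of these by $-2$ and reducing modulo~$2$ leaves $e_{11}+e_{12}+e_{21}+e_{22}$, which is the number of edges between $X$ and $Y$ modulo~$2$. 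When $(X,Y)$ is an odd pair this parity is $1$, so $\det A_{I,J}\equiv 2\pmod{4}$, forcing $|\det A_{I,J}|\geq 2$. Combining this with the contribution from $|I\cap J|=k-1$ already extracted in Lemma~\ref{llb} yields~\eqref{eq-t}.

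The main obstacle is the $2$-adic bookkeeping: one must verify that the mod-$4$ expansion really drops the $|S|\geq 2$ cross terms and the $|S|=1$ terms for $i\geq 3$, and that the signs from the block-triangular reduction are tracked correctly so that the parity of the number of edges between $X$ and $Y$ -- the defining feature of an odd pair -- appears cleanly as the final residue modulo~$4$.
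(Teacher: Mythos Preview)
Your proposal is correct and follows the same overall architecture as the paper: both use the Cauchy--Binet expansion $S_{k}(A^{2})=\sum_{I,J}(\det A_{I,J})^{2}$, keep the $|I\cap J|=k-1$ contribution from Lemma~\ref{llb}, and then harvest the pairs with $|I\cap J|=k-2$ for which $(I\setminus J,\,J\setminus I)$ is an odd pair, showing each such minor has $|\det A_{I,J}|\ge 2$ and counting $\nop{G}\binom{n-4}{k-2}$ of them.

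Where you diverge is in the verification of $|\det A_{I,J}|\ge 2$. The paper normalises the $2\times2$ block corresponding to $(X,Y)$ to $\bigl(\begin{smallmatrix}1&1\\1&-1\end{smallmatrix}\bigr)$ by sign changes on rows and columns, then applies column operations to obtain a block-triangular matrix whose lower-right block is congruent to $I_{k-2}$ modulo $2$; the determinant is then $\pm 2$ times an odd integer. You instead write $A_{I,J}=M-2N$ with $M$ the off-diagonal all-ones pattern and $N$ the adjacency submatrix, expand $\det(M-2N)$ by multilinearity in rows, and analyse the result modulo~$4$: the $|S|=0$ and $|S|=1$, $i\ge 3$ terms vanish because two rows of $M$ coincide, the $|S|\ge 2$ terms are divisible by $4$, and the surviving $S=\{1\},\{2\}$ terms combine to $2(e_{11}+e_{12}+e_{21}+e_{22})\pmod 4$, which is $2\pmod 4$ exactly for an odd pair. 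Your argument is a clean $2$-adic computation that makes the role of the edge-parity entirely transparent; the paper's argument is more hands-on but avoids the multilinear expansion. Both reach $\det A_{I,J}\equiv 2\pmod 4$ in the odd-pair case, and the subsequent counting and disjointness of the two families of $(I,J)$ are identical.
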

\begin{proof}
	The term $ n(n-1) \binom{n-2}{k-1} $ on the right hand side of \eqref{eq-t} is deduced by the proof of Lemma \ref{llb}. Now, suppose that $ (X,Y) $ be an odd pair. If $ I \subset G $, $ \vert I \vert=k-2 $ and $ I \cap X= I \cap Y= \varnothing $, we claim that the absolute value of determinant of $ k \times k $ submatrix of $ \sm{G}$ corresponding to $ \sm{G}_{I \cup X, I \cup Y} $ is at least 2. To prove the claim, note that after applying some permutations to its rows and columns of $ \sm{G}_{I \cup X, I \cup Y} $ one can see that $ \sm{G}_{I \cup X, I \cup Y} $ is changed to $ U $, where
$$ U = \left(
\begin{array}{c|c}
T &  \begin{array}{cccc} 
		\pm 1 & \pm 1 & \cdots & \pm 1 \\
		\pm1 & \pm1 & \cdots & \pm1
	\end{array}          \\
	\hline 
	\begin{array}{cccc} 
		\pm 1 & \pm 1 \\
		\pm 1 & \pm 1\\
		\vdots &  \vdots \\
		\pm 1 &  \pm 1
	\end{array} & 	\begin{array}{cccc} 
	0 & \pm 1 & \cdots & \pm 1 \\
	\pm 1 & 0  &  \cdots & \pm 1\\
	\vdots &  \vdots  & \ddots & \vdots\\
	\pm 1 &  \pm 1 & \cdots & 0
	\end{array} 
\end{array}\right),$$
and $ T $ has an odd number of $ -1 $.  After multiplying the rows and columns of $ T $ by $ -1 $ if necessary, $ T $ can be changed to the matrix
		\[\begin{pmatrix}
	1 &  1\\
	1 & -1
	\end{pmatrix}.
	\]
Now, by applying some elementary column operations on $ U $, one can obtain the following matrix:
$$ \left(
\begin{array}{c|c}
\begin{array}{cc}
1 & 1\\
1 & -1
\end{array} &  \begin{array}{cccc} 
0 & 0 & \cdots & 0 \\
0 & 0 & \cdots & 0
\end{array}          \\
\hline 
  \begin{array}{ccc}\\
    \vspace{1em}
 {\Huge \ast}
\end{array} &
              \begin{array}{ccc}
                \\
                \vspace{1em}
{\Huge R }
\end{array} 
\end{array}\right).$$
	Clearly, $ R = I_{k-2} $ and $ \det R = 1 $ modulo 2. This implies that
	\begin{equation*}
	\lvert \det U \rvert  \geq 2, \qquad (\text{over real numbers}).
	\end{equation*}
	We have $ \binom{n-4}{k-2} $ number of subsets $ I $ with the desired property. Note that we have  $ \nop{G} \binom{n-4}{k-2} $ number of $ k \times k $ submatrices which introduced. Because, if $ \sm{G}_{I \cup X, I \cup Y}=\sm{G}_{I^{\prime} \cup X^{\prime}, I^{\prime} \cup Y^{\prime}} $, where $ (X,Y) $ and $ (X^{\prime}, Y^{\prime}) $ are two odd pairs, then
	\begin{equation*}
	I=(I \cup X) \cap (I \cup Y)=(I^{\prime} \cup X^{\prime}) \cap (I^{\prime} \cup Y^{\prime})=I^{\prime},
	\end{equation*}
	and therefore, $ X=X^{\prime} $ and $ Y=Y^{\prime} $.
	 Now, by Theorem \ref{CB}, these submatrices have $ 4\nop{G} \binom{n-4}{k-2} $ contribution in $ S_{k}(A^{2}) $. To complete the proof, note that the $ k \times k $ submatrices of $ \sm{G} $ which considered here and those in the proof of Lemma \ref{llb} (which gives us the first term on the right hand side of \eqref{eq-t}) do not have any intersection.
      \end{proof}
      
	\begin{lemma}\label{lat}
		Let $ G $ be a graph of order $ n $ which is not SC-equivalent  to $ K_{n} $. Then $ G $ has at least one odd pair.
	\end{lemma}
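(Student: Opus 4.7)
The plan is to prove the contrapositive: if $\nop{G}=0$ then $G$ is SC-equivalent to $K_n$. The strategy is to first reduce $G$ to a convenient normal form via a single Seidel switch, and then to use the absence of odd pairs to pin down its structure. The first ingredient I would check is that the property ``$\nop{G}=0$'' is itself invariant under Seidel switching and complementation. Under a switch with partition $V(G)=V_1\sqcup V_2$, a potential edge $xy$ with $x\in X$, $y\in Y$ is flipped precisely when $x$ and $y$ lie in different parts; letting $a=|X\cap V_1|$ and $b=|Y\cap V_1|$, the number of such straddling $(x,y)\in X\times Y$ equals $a(2-b)+(2-a)b = 2(a+b-ab)$, which is even, so the parity of the edge count between $X$ and $Y$ is preserved. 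Complementation flips all four potential $X$--$Y$ edges, so parity is preserved as well.

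Next, I would put $G$ into normal form by fixing a vertex $v_1$ and performing the Seidel switch with parts $V_1=N_G(v_1)$ and $V_2=V(G)\setminus V_1$, which makes $v_1$ isolated; call the resulting graph $G'$, which still has no odd pair by the previous step. For any three distinct vertices $a,b,c$ of $V(G')\setminus\{v_1\}$, the ordered pair $(\{v_1,a\},\{b,c\})$ must contain an even number of edges, and since $v_1$ is isolated this number equals $\mathbf{1}_{ab\in E(G')}+\mathbf{1}_{ac\in E(G')}$. Hence for each fixed $a\neq v_1$, $a$ is $G'$-adjacent either to every vertex of $V(G')\setminus\{v_1,a\}$, or to none of them. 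If both types occur simultaneously, inspecting the edge between an $a$ of the first type and a $b$ of the second type gives a contradiction, so a single alternative holds uniformly for all $a\neq v_1$. Consequently $G'$ is either the empty graph on $n$ vertices, or the disjoint union of $\{v_1\}$ with a $K_{n-1}$ on the other vertices.

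To conclude, both outcomes are SC-equivalent to $K_n$: the empty graph is $\overline{K_n}$, while in the other case one further Seidel switch with partition $\{v_1\}\sqcup(V(G')\setminus\{v_1\})$ adds all edges from $v_1$ to $V(G')\setminus\{v_1\}$ while leaving the clique on $V(G')\setminus\{v_1\}$ untouched, producing $K_n$ itself. The only mildly delicate step is the parity bookkeeping in the switching-invariance check; the remaining structural forcing is an elementary two-case argument.
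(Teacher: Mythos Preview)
Your proof is correct and follows essentially the same route as the paper's: normalize by a Seidel switch at a fixed vertex, then observe that any vertex of mixed adjacency among the remaining vertices forces an odd pair, so in its absence $G\setminus\{v\}$ must be complete or empty. The only differences are cosmetic---you isolate the distinguished vertex rather than making it universal, argue via the contrapositive, and supply an explicit parity check that $\nop{G}=0$ is invariant under switching and complementation, a step the paper leaves implicit with the phrase ``one can suppose''.
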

\begin{proof}
	One can see that if $ n \leq 3 $, then $ G $ is SC-equivalent to $ K_{n} $. So, assume that $ n \geq 4 $. Let $ v \in V(G) $ be an arbitrary vertex. By applying a Seidel switching on $ G $ with respect to $ N_{G}[v] $ and $ \overline{N_{G}[v]} $, one can suppose that $ v $ is adjacent to all vertices of $ G $. Let $ v_{i}, v_{j} $ and $ v_{k} $ be vertices of $ G \setminus \{v\} $ such that $ v_{i} $ and $ v_{j} $ are adjacent but $ v_{i} $ and $ v_{k} $ are not adjacent. Then, $ (\lbrace v, v_{i} \rbrace , \lbrace v_{j}, v_{k} \rbrace) $ is an odd pair in $ G $.
	
	If such $ v_{i}, v_{j} $ and $ v_{k} $ do not exist, then one can deduce that $ G \setminus \{v\} $ has no induced subgraph isomorphic to $ K_{1} \cup K_{2} $ or $ \overline{K_{1} \cup K_{2}} $. Therefore, $ G \setminus \lbrace v \rbrace $ is either complete graph or empty graph. Hence $ G=K_{n} $ or using a Seidel switching with respect to $ \lbrace v \rbrace $ and $ G \setminus \lbrace v \rbrace $, we can delete all edges of $ G $ and so $ G=\overline{K_{n}}$.
\end{proof}
\begin{lemma}\label{ltg}
	If $ G $ is a graph of order $ n $ and $ \nop{G} \geq 1 $, then $ \nop{G} \geq 2(n-3)^{2} $.
\end{lemma}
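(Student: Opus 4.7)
My plan is to build the proof around a \emph{twin} equivalence on $V(G)$. Call $u, v \in V(G)$ \emph{twin} if there is $\epsilon \in \{\pm 1\}$ with $a_{uw} = \epsilon\, a_{vw}$ for every $w \in V(G) \setminus \{u, v\}$, where $a_{ij}$ denotes the $(i,j)$-entry of $\sm{G}$. Reflexivity and symmetry are automatic; transitivity requires a short sign-chasing argument (if $u \sim v$ with sign $\alpha$ and $v \sim w'$ with sign $\beta$, then $u \sim w'$ with sign $\alpha\beta$), treating $w'' \in V \setminus \{u, v, w'\}$ and $w'' = v$ separately. The proof of Lemma \ref{lat} in fact shows that $\sim$ has a single equivalence class iff $G$ is SC-equivalent to $K_n$, so the hypothesis $\nop{G} \geq 1$ forces at least two twin classes.

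Next, I would rephrase $\nop{G}$ in Hamming-distance terms. For a $2$-subset $X = \{a, b\}$ set $q(X) = |\{w \in V(G) \setminus X : a_{aw} \neq a_{bw}\}|$ and $p(X) = n - 2 - q(X)$. A $2$-subset $Y = \{c, d\}$ disjoint from $X$ gives an odd pair $(X, Y)$ exactly when $c, d$ lie on opposite sides of the partition of $V \setminus X$ induced by $w \mapsto a_{aw} a_{bw}$; hence the number of such $Y$ equals $p(X) q(X)$, and
\[ \nop{G} = \sum_{X \in \binom{V(G)}{2}} p(X)\,q(X). \]
The key observation is that $p(X) q(X) = 0$ exactly when $\{a, b\}$ is a twin pair (i.e.\ $q(X) \in \{0, n-2\}$); for any non-twin pair $q(X) \in \{1, \ldots, n-3\}$, and therefore
\[ p(X)\,q(X) = q(X)\bigl(n - 2 - q(X)\bigr) \geq 1 \cdot (n - 3) = n - 3. \]

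Finally, I would lower-bound the number of non-twin pairs via twin class sizes. Let $C_1, \ldots, C_k$ be the twin classes with $|C_i| = n_i$, and $k \geq 2$. The crucial structural claim is $n_{\max} \leq n - 2$: if some class $C$ had $n - 1$ vertices and $v$ were the outlier, then after applying Seidel switchings that put each twin class in internally uniform form (the pairwise signs $\alpha(u, v)$ within a class factor as $\beta(u)\beta(v)$ by transitivity, and this can be absorbed into a switching), both rows $a_{u\cdot}$ and $a_{v\cdot}$ restricted to $V \setminus \{u, v\} = C \setminus \{u\}$ would be constant and thus $\pm$-parallel, forcing $v \in C$, a contradiction. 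With $n_{\max} \leq n - 2$, the number of non-twin pairs is at least $2(n-2)$: if $n_{\max} \geq 2$ already the cross pairs between $C_{\max}$ and its complement contribute $n_{\max}(n - n_{\max}) \geq 2(n-2)$ (since $x(n - x) \geq 2(n-2)$ on $[2, n-2]$), and if $n_{\max} = 1$ every pair is non-twin with $\binom{n}{2} \geq 2(n-2)$ for $n \geq 4$. Combining the two bounds,
\[ \nop{G} \;\geq\; (n - 3) \cdot 2(n - 2) \;\geq\; 2(n - 3)^{2}, \]
which is in fact slightly stronger than the stated inequality. The chief obstacle I anticipate is the clean verification of transitivity of $\sim$ and the structural bound $n_{\max} \leq n - 2$; both are elementary but require careful case analysis.
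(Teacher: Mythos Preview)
Your argument is correct and in fact yields the slightly sharper bound $\nop{G}\ge 2(n-2)(n-3)$, but it proceeds along a genuinely different route from the paper. The paper works constructively: starting from a single odd pair $(\{v_1,v_2\},\{v_3,v_4\})$ it produces, for each $i\ge 4$, a set $V_i\ni v_i$ with $(\{v_1,v_2\},V_i)$ odd, and then for each admissible $j$ a set $W_j\subset\{v_1,v_2,v_j\}$ with $(W_j,V_i)$ odd, giving $(n-3)^2$ distinct ordered odd pairs; reversing the order doubles the count. Your route is structural: the identity $\nop{G}=\sum_X p(X)q(X)$ reduces the problem to (i) bounding each nonzero summand below by $n-3$ and (ii) bounding the number of non-twin $2$-sets below by $2(n-2)$. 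Two small remarks on your write-up: you do not actually need Lemma~\ref{lat}, since a single twin class already forces every $p(X)q(X)=0$ and hence $\nop{G}=0$ directly from your formula; and the bound $n_{\max}\le n-2$ can be obtained without any switching (for $u\in C$ with $|C|=n-1$ and $v\notin C$, the relation $w_1\sim w_2$ between any $w_1,w_2\in C\setminus\{u\}$ evaluated at $z=u$ and at $z=v$ shows that $a_{uw}a_{vw}$ is constant over $w\in C\setminus\{u\}$, whence $v\sim u$). One trade-off worth noting: the paper's $2(n-3)^2$ odd pairs all contain $v_1$ or $v_2$, and this localisation is exploited later in the proof of Theorem~\ref{firstthm2} to find $2(n-5)^2$ \emph{additional} odd pairs when $G\setminus\{v_1,v_2\}$ is not SC-equivalent to $K_{n-2}$; your approach does not track where the odd pairs live, so that step would require a separate argument.
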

\begin{proof}
	Let $ V(G)= \lbrace v_{1}, \ldots , v_{n} \rbrace $. Without loss of generality, suppose that $ (\lbrace v_{1}, v_{2} \rbrace , \lbrace v_{3}, v_{4} \rbrace) $ is an odd pair. Now, for every $ i, 4 \leq i \leq n $, the parity of the number of edges between $ v_{i} $ and $ \lbrace v_{1}, v_{2} \rbrace $ is different from the parity of the number of edges between $ v_{3} $ and $ \lbrace v_{1}, v_{2} \rbrace $ or $ v_{4} $ and $ \lbrace v_{1}, v_{2} \rbrace $. Hence, exactly one of the $ (\lbrace v_{1}, v_{2} \rbrace , \lbrace v_{3}, v_{i} \rbrace) $ and $ (\lbrace v_{1}, v_{2} \rbrace , \lbrace v_{4}, v_{i} \rbrace) $ is an odd pair. So, if we show the second element of this odd pair by $ V_{i} $ ($ v_{i} \in V_{i} $), then $ (\lbrace v_{1}, v_{2} \rbrace , V_{i}) $ is an odd pair. Similarly, if $ i \geq 4 $ and $ v_{j} \notin V_{i} \cup \{v_{1}\} $ ($ 1 \leq j \leq n $), then exactly one of the $ (\lbrace v_{1}, v_{j} \rbrace , V_{i}) $ and $ (\lbrace v_{2}, v_{j} \rbrace , V_{i}) $ is an odd pair. Denote the first element of this odd pair by $ W_{j} $ and hence, $ (W_{j}, V_{i}) $ is an odd pair. So, we have $ (n-3)^{2} $ odd pairs. Note that for every $ i $ and $ j $, $ (V_{i}, W_{j}) $ is an odd pair, too and is not equal to an odd pair $ (W_{j^{\prime}}, V_{i^{\prime}}) $. Therefore we have at least $ 2(n-3)^{2} $ odd pairs in total, as desired.
	\end{proof}
Now, we state a technical lemma for the cubic polynomials with positive coefficients.
\begin{lemma}\label{ld3}
	If $ f(t)=1+ a t +bt^{2}+ct^{3}  $ is a cubic polynomial with positive coefficients, then
	\begin{equation*}
	C_{\frac{1}{2}} \int_{0}^{\infty} \ln (f(t)) t^{-\frac{3}{2}} dt \geq \sqrt{a+2\sqrt{b+2\sqrt{ac}}}.
	\end{equation*}
\end{lemma}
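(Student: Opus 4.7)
The plan is to factor $f(t) = \prod_{i=1}^{3}(1 + \alpha_i t)$ and apply the identity \eqref{sum} with $p = \tfrac12$. First I would check that no $\alpha_i$ is a negative real: since $a,b,c > 0$, the polynomial $f$ has no nonnegative real root, so each root $-1/\alpha_i$ of $f$ lies in the open left half-plane and hence each $\alpha_i$ lies in the open right half-plane. Consequently \eqref{sum} legitimately rewrites the left-hand side as
\[
\Sigma \;:=\; \sum_{i=1}^{3} \alpha_i^{1/2},
\]
using the principal branch, and the lemma reduces to the inequality $\Sigma \geq \sqrt{a + 2\sqrt{b + 2\sqrt{ac}}}\,$.

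Next I would introduce the square-root variables $\beta_i := \alpha_i^{1/2}$ and the middle symmetric sum $T := \beta_1\beta_2 + \beta_1\beta_3 + \beta_2\beta_3$. Direct expansion together with Vieta's formulas for the $\alpha_i$'s gives the two identities
\[
\Sigma^2 = a + 2T, \qquad T^2 = b + 2\beta_1\beta_2\beta_3\,\Sigma = b + 2\sqrt{c}\,\Sigma,
\]
where $\beta_1\beta_2\beta_3 = \sqrt{c}$ is a positive real because $c > 0$ and the principal branches are compatible when no $\alpha_i$ is a negative real. The crucial preliminary step is to verify that both $\Sigma$ and $T$ are themselves real and nonnegative. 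In the case that all three $\alpha_i$ are positive reals this is immediate; in the remaining case, one $\alpha_i$ is positive real and the other two are complex conjugates in the open right half-plane, so their principal square roots lie in the sector $\lvert \arg z\rvert < \pi/4$ and are again conjugates, and one checks directly that the sum and the pairwise products combine into nonnegative real quantities.

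Once $\Sigma, T \geq 0$ is in hand, the conclusion is a short two-step chain. From $T \geq 0$ and $\Sigma^2 = a + 2T$ we obtain $\Sigma \geq \sqrt{a}$, so
\[
T^2 = b + 2\sqrt{c}\,\Sigma \;\geq\; b + 2\sqrt{ac},
\]
and hence $T \geq \sqrt{b + 2\sqrt{ac}}\,$. Substituting this lower bound back yields
\[
\Sigma^2 \;=\; a + 2T \;\geq\; a + 2\sqrt{b + 2\sqrt{ac}},
\]
and taking square roots (valid since $\Sigma \geq 0$) gives the claimed inequality.

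The main obstacle I foresee is the careful verification that $\Sigma$ and $T$ are real and nonnegative in the case of a complex conjugate pair of $\alpha_i$'s, together with the consistency of the branches that lets us write $\beta_1\beta_2\beta_3 = \sqrt{c}$. Once that positivity is secured, everything else is a one-line symmetric-function manipulation.
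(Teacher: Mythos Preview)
Your approach is the same as the paper's: factor $f$, apply \eqref{sum} with $p=\tfrac12$, set up the two identities $\Sigma^{2}=a+2T$ and $T^{2}=b+2\sqrt{c}\,\Sigma$, and bootstrap from $\Sigma\geq\sqrt{a}$.

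There is, however, one genuine slip in your justification. From ``$f$ has no nonnegative real root'' you infer that every root of $f$ lies in the open left half-plane, hence every $\alpha_i$ lies in the open right half-plane. This implication is false: by the Routh--Hurwitz criterion, the cubic $ct^{3}+bt^{2}+at+1$ with positive coefficients is stable if and only if $ab>c$, which is not guaranteed here (e.g.\ $a=b=1$, $c=2$). So the complex conjugate pair $\alpha_2,\alpha_3$ may well sit in the left half-plane, and your sector claim $|\arg\beta_i|<\pi/4$ collapses. The good news is that this stronger claim is unnecessary. All that is needed, and all the paper uses, is that no $\alpha_i$ is a \emph{negative real}, which follows from $f(t)\ge 1$ for $t\ge 0$. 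Then the principal square roots of conjugates are again conjugates, so writing $\beta_3=\overline{\beta_2}$ one gets directly
\[
\Sigma=\beta_1+2\,\mathrm{Re}\,\beta_2,\qquad T=2\beta_1\,\mathrm{Re}\,\beta_2+|\beta_2|^{2},\qquad \beta_1\beta_2\beta_3=\beta_1|\beta_2|^{2}=\sqrt{c},
\]
and every principal square root of a nonzero number off the negative axis has strictly positive real part, so $\Sigma,T>0$. With this correction in place your bootstrap goes through verbatim and coincides with the paper's proof.
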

\begin{proof}
	Since the constant term of $ f $ is 1, one can consider the following factorization
	\begin{equation*}
	f(t)=(1+\alpha_{1} t)(1+\alpha_{2} t)(1+\alpha_{3} t),
	\end{equation*}
	where $ \alpha_{1}, \alpha_{2}, \alpha_{3} \in \mathbb{C} $. 
	Since $ f $ is positive over $ \mathbb{R}^{\geq 0} $, no $ \alpha_{i} $ is a negative real number, because otherwise $ f $ has a positive root, a contradiction. On the other hand, $ f $ has a real root, so at least one of $ \alpha_{i} $, say $ \alpha_{1}$, is a positive real number. Hence, $ \alpha_{2} $ and $ \alpha_{3} $ are either positive real numbers or conjugate complex numbers. Note that by our convention about the arguments of complex numbers, if $ \alpha $ and $ \beta $ are conjugate, then $ \sqrt{\alpha} \sqrt{\beta} = \sqrt{\alpha \beta}$. Now by \eqref{sum}, 
	\begin{equation*}
	\sum\limits_{i=1}^{3}\sqrt{\alpha_{i}}= C_{\frac{1}{2}} \int_{0}^{\infty} \ln (f(t)) t^{-\frac{3}{2}} dt.
	\end{equation*}
	Define $ X=\sum \limits_{i=1}^{3}\sqrt{\alpha_{i}} $. We have
	\begin{align}\label{xx}
	X^{2} & =\sum\limits_{i=1}^{3}\alpha_{i}+2 \sum\limits_{i < j} \sqrt{\alpha_{i} \alpha_{j}} = a +2 \sum\limits_{i < j} \sqrt{\alpha_{i}\alpha_{j}} \\
	\label{eq:1}
	(\sum\limits_{i < j}\sqrt{\alpha_{i}\alpha_{j}})^{2} & = \sum\limits_{i < j}\alpha_{i}\alpha_{j} + 2 \sqrt{\alpha_{1}\alpha_{2}\alpha_{3}} \sum\limits_{i=1}^{3}\sqrt{\alpha_{i}}= b +2(\sqrt{c})X. 
	\end{align}
	In both cases, either $ \alpha_{i} $ is positive or $ \alpha_{2}= \overline{\alpha_{3}}$, the values $ \sum \limits_{i < j} \sqrt{\alpha_{i}\alpha_{j}} $ and 
	$ \sqrt{c} $ are positive numbers. So, Equation \eqref{xx} implies  $ X \geq \sqrt{a}$.
	Now, \eqref{xx} and \eqref{eq:1} imply that,
	\begin{equation*}
	 X= \sqrt{a+ 2 \sum\limits_{i < j} \sqrt{\alpha_{i}}\sqrt{\alpha_{j}}}=\sqrt{a+ 2 \sqrt{b+2 \sqrt{c}X}} \geq \sqrt{a+2\sqrt{b+2\sqrt{ac}}}.
	\end{equation*}
\end{proof}
Now, we are ready to prove Theorem \ref{firstthm2}.
\vskip 2mm
\noindent \textbf{Proof of Theorem 2.}
 As stated in \cite{Ham},  using a computer search one can see that $ \se{G} \geq 2n-2 $ for $ n \leq 10 $. So, we assume that $ n > 10 $. Now, by Lemma \ref{lskA2}, for $ k= 1 , \ldots, n $, we have
\begin{equation}
S_{k}(A^{2}) \geq n(n-1) \binom{n-2}{k-1} + 4 \nop{G} \binom{n-4}{k-2},
\end{equation}
where $ A=\sm{G} $.
Hence, for the eigenvalues $ \lambda_{1}, \ldots , \lambda_{n} $ of $ A $ and $ t > 0 $, we have 
\begin{align*}
\prod \limits_{i=1}^{n} (1+t \lambda_{i}^{2}) & =1+ \sum_{k=1}^nS_k(A^2)t^k\\
 & \geq 1+ \sum\limits_{k=1}^{n}  (n(n-1)\binom{n-2}{k-1}+4\nop{G}\binom{n-4}{k-2})t^{k} \\ 
& = 1+n(n-1)t(1+t)^{n-2}+4\nop{G}t^{2}(1+t)^{n-4} \\ 
&= 1 + (n-4)t(1+t)^{n-2}+(n^{2}-2n+4)t(1+t)^{n-2}+4\nop{G}t^{2}(1+t)^{n-4}\\
&  > (1+t)^{n-4}(1+(n^{2}-2n+4)t(1+t)^{2}+4\nop{G}t^{2}), 
\end{align*}
where the last inequality follows from
\begin{equation*}
(1+t)^{n-4} < 1+ (n-4)t(1+t)^{n-5} < 1+ (n-4)t(1+t)^{n-2}.
\end{equation*}
Therefore we have
\begin{align}\label{en4}
\se{G} & \nonumber = \sum\limits_{i=1}^{n} \lvert \lambda_{i} \rvert =C_{\frac{1}{2}}\int_{0}^{\infty} \ln(\prod \limits_{i=1}^{n} (1+t \lambda_{i}^{2}))t^{-\frac{3}{2}}dt \\ \nonumber
& > C_{\frac{1}{2}}\int_{0}^{\infty} \ln((1+t)^{n-4}(1+(n^{2}-2n+4)t(1+t)^{2}+4\nop{G}t^{2}))t^{-\frac{3}{2}}dt\\ \nonumber
& = C_{\frac{1}{2}}\int_{0}^{\infty} \ln(1+t)^{n-4}t^{-\frac{3}{2}}dt+C_{\frac{1}{2}} \int_{0}^{\infty} \ln(1+(n^{2}-2n+4)t(1+t)^{2}+4\nop{G}t^{2})t^{-\frac{3}{2}}dt\\ 
 & = (n-4) + C_{\frac{1}{2}}\int_{0}^{\infty} \ln(1+(n^{2}-2n+4)t(1+t)^{2}+4\nop{G}t^{2})t^{-\frac{3}{2}}dt.
\end{align}
Let $ g(t)=1+(n^{2}-2n+4)t(1+t)^{2}+4\nop{G}t^{2}=1+\alpha t + (2 \alpha + 4\nop{G})t^{2}+ \alpha t^{3}  $, where $ \alpha=n^{2}-2n+4 $. Hence by Lemma \ref{ld3}, we have
\begin{align*}
C_{\frac{1}{2}}\int_{0}^{\infty} \ln(g(t))t^{-\frac{3}{2}} & \geq \sqrt{\alpha+2 \sqrt{(2\alpha+4\nop{G})+ 2 \alpha}} \\ & = \sqrt{\alpha+4 \sqrt{\alpha+\nop{G}}}.
\end{align*}
Note that for every natural number $ n $, $ \alpha=n^{2}-2n+4 \geq \frac{3}{4}n^{2} $ which implies that
\begin{equation}\label{ex}
C_{\frac{1}{2}}\int_{0}^{\infty} \ln(g(t))t^{-\frac{3}{2}} \geq \sqrt{n^{2}-2n+4+4\sqrt{\frac{3}{4}n^{2}+\nop{G}}}.
\end{equation}
Therefore if $ \nop{G} \geq \frac{3}{2}n^{2} $, we have
\begin{equation*}
C_{\frac{1}{2}}\int_{0}^{\infty} \ln(g(t))t^{-\frac{3}{2}} \geq \sqrt{n^{2}-2n+4+6n} = n+2,
\end{equation*}
which Equation \eqref{en4} yields that, $ \se{G} > 2n-2 $.
Notice that for any graph $ G $, which is SC-equivalent to
$ K_{n} $, $ \se{G}=2n-2 $ holds. Hence we assume that $ G $
is not SC-equivalent to $ K_{n} $. In this case, by Lemma
\ref{ltg}, $ \nop{G} \geq 2(n-3)^{2} $ and so for
$ n \geq 23 $, we have $ \nop{G} \geq \frac{3}{2}n^{2} $, as desired. Also, if $ G $ is not
SC-equivalent to $ K_{n} $, then by Lemma \ref{lat}, there is an odd pair in
$ G $, say
$ (\lbrace v_{1},v_{2} \rbrace , \lbrace v_{3}, v_{4}
\rbrace) $, and as the proof of Lemma \ref{ltg} shows, all
$ 2(n-3)^{2} $ odd pairs of $ G $ contain $ v_{1} $ or
$ v_{2} $. Hence if
$ G \setminus \lbrace v_{1}, v_{2} \rbrace $ is not
SC-equivalent  to $ K_{n-2} $, $ G $ has $ 2(n-5)^{2} $ other odd
pairs. Notice that for every natural number $ n \geq 11 $,
$ 2(n-3)^{2}+2(n-5)^{2} \geq \frac{3}{2}n^{2} $, as desired. To complete the proof, we consider the graph $ G $ with $ 10 < n=\lvert V(G) \rvert < 23 $ and
$ G \setminus \lbrace v_{1}, v_{2} \rbrace $ is SC-equivalent  to
$ K_{n-2} $. Now, by applying the Seidel switching on $ G $ or its complement, one can assume that
$ G \setminus \lbrace v_{1}, v_{2} \rbrace \simeq K_{n-2} $.
The number of these graphs, up to isomorphism, is at most
$ 2n^{2} $ and so by a computer search, it can be easily
checked that the assertion holds for these graphs. The
proof is complete. \QEDB
\vskip 2mm
Now, by \cite[Theorem 13]{obu}, we close the paper with the following corollary.
\begin{corollary}
	For every graph $ G $, $ 	\se{G} \geq \mathcal{E}(G)$.
\end{corollary}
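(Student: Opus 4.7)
The plan is to deduce the corollary by combining the newly-established Theorem \ref{firstthm2} (Haemers' Conjecture) with the inequality recorded as \cite[Theorem 13]{obu}. Oboudi's inequality supplies a one-sided comparison of the form
\[
\mathcal{E}(G) \;\leq\; (n-1) + \tfrac{1}{2}\,\se{G},
\]
which becomes useful exactly when a matching lower bound $\se{G} \geq 2(n-1)$ is available---precisely what Theorem \ref{firstthm2} now provides.

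I expect Oboudi's estimate to arise by applying the triangle inequality for the trace norm (equivalently, Ky Fan's inequality for sums of singular values) to the matrix identity $-2A = \sm{G} - (J - I)$, where $A$ is the adjacency matrix of $G$ and $J$ is the $n\times n$ all-ones matrix, $n = |V(G)|$. Since $J - I$ has eigenvalues $n-1$ (with multiplicity one) and $-1$ (with multiplicity $n-1$), its trace norm equals $2(n-1)$, and subadditivity of the trace norm yields $2\,\mathcal{E}(G) \leq \se{G} + 2(n-1)$, which rearranges to the displayed inequality.

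The final step is pure substitution: Theorem \ref{firstthm2} gives $\se{G} \geq 2(n-1)$, so $(n-1) \leq \tfrac{1}{2}\se{G}$, and the displayed inequality becomes $\mathcal{E}(G) \leq \tfrac{1}{2}\se{G} + \tfrac{1}{2}\se{G} = \se{G}$. There is essentially no obstacle to overcome: both ingredients---the nuclear-norm triangle inequality of \cite[Theorem 13]{obu} and Haemers' Conjecture---are already in hand, and the corollary reduces to their one-line combination.
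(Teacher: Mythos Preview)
Your proposal is correct and follows essentially the same route as the paper: the paper simply invokes \cite[Theorem 13]{obu} together with Theorem~\ref{firstthm2}, and Oboudi's Theorem~13 is precisely the conditional statement (obtained via the trace-norm triangle inequality applied to $\sm{G}=J-I-2A$) that $\se{G}\geq 2(n-1)$ implies $\se{G}\geq\mathcal{E}(G)$. You have merely unpacked the content of the cited theorem, so the two arguments coincide.
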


\end{document}